 \newtheorem{thm}{Theorem}[section]
 \newtheorem{cor}[thm]{Corollary}
 \theoremstyle{definition}
 \newtheorem{defn}[thm]{Definition}
 \theoremstyle{remark}
 \numberwithin{equation}{section}
\begin{document}

%
%
%
%
%
%
%
%
%

\title[Fibonacci-Lucas Spinors Obtained from Hybrid Numbers]
 {Fibonacci-Lucas Spinors Obtained from Hybrid Numbers}

\author{Selime Beyza Özçevik}
\address{Department of Mathematics,\br
Ondokuz Mayıs University,\br
55139, Samsun,\br
Turkey}
\email{ozcevikbeyza8@gmail.com}

\author{Abdullah Dertli}
\address{Department of Mathematics,\br
Ondokuz Mayıs University,\br
55139, Samsun,\br
Turkey}
\email{abdullah.dertli@gmail.com}

\author{Savaş Karaahmetoğlu}

\address{%
Akpınar Science High School,\\
55760, Samsun\\
Turkey}

\email{sawasx@gmail.com}




\subjclass{Primary 15A66; Secondary 11R09, 11Y55}

\keywords{Fibonacci numbers, Hybrid numbers, Spinors}

\date{January 1, 2004}

\begin{abstract}
The aim of this work is to provide the contributors to journals or Hybrid numbers, akin to spinors, possess a broad range of applications in mathematical physics, geometry, and mathematics. In this study, these two significant topics were collectively addressed, introducing a new perspective to spinors and defining hybrid spinors, from which several basic properties were derived. Furthermore, by considering the Fibonacci and Lucas number sequences, subjects widely explored in number theory, the study defined hybrid Fibonacci and Lucas spinor sequences, examining some of their properties. Lastly, hybrid Fibonacci and Lucas polynomial spinor sequences have been introduced through the utilization of the polynomials associated with these number sequences.
\end{abstract}

\maketitle
\section{Introduction}
Complex, hyperbolic, and dual numbers represent established two-dimensional number systems that have garnered considerable attention from researchers, particularly in the past century. Numerous scholars have explored the geometric and physical applications inherent in these numerical constructs. Much like how complex numbers facilitate the description of Euclidean plane geometry, hyperbolic numbers find application in characterizing the geometry of the Minkowski plane, while dual numbers are employed to articulate the geometry of the Galilean plane \cite{yaglom2012simple}. n scholarly discourse, hyperbolic numbers are denoted by various terms such as 'spacetime,' 'double,' 'perplex,' and 'split-complex numbers,' as indicated in the literature \cite{borota2000spacetime,fjelstad1986extending, poodiack2009fundamental, rochon2004algebraic}. This number system constitutes a generalization encompassing the characteristics of complex, hyperbolic, and dual number systems. These mathematical structures, utilized in diverse disciplines, exhibit various relationships among themselves, underscoring the significance of understanding and applying mathematical diversity.Hybrid numbers are a mathematical structure that typically combines real numbers with complex numbers, exhibiting broad applications in mathematics. This structure provides an extended version of the complex plane. Simultaneously, spinors are mathematical objects playing a crucial role in quantum mechanics and theoretical physics. Represented often by complex numbers or matrices, spinors are utilized to model the rotational motion of particles around their own axis. Hybrid numbers and spinors are structural elements of critical importance in mathematical physics and theoretical physics, serving to comprehend the fundamental properties of the universe and explain the behaviors of fundamental particles.

\section{Preliminaries}
Hybrid numbers were initially introduced in \cite{ozdemir2018introduction}. Drawing upon this study, we have incorporated essential concepts necessary for our own research, as outlined below. Further information on hybrid numbers can be found in \cite{ozdemir2018introduction}.

The set of hybrid numbers, denoted by $\mathbb{K}$, is defined as

$$
\mathbb{K}=\left\{a+b i+c \varepsilon+d h ; a, b, c, d \in \mathbb{R}, i^2=-1, \varepsilon^2=0, h^2=1, i h=-h i=\varepsilon+i^2\right\}.
$$

This set of numbers can be thought as a set of quadruplet. And a real, complex, dual and hyperbolic units are defined as

$$
1 \longleftrightarrow(1,0,0,0), i \leftrightarrow(0,1,0,0), \varepsilon \leftrightarrow(0,0,1,0), h \longleftrightarrow(0,0,0,1)
$$
respectively.

Let $Z,W \in\mathbb{K}$ be two hybrid numbers. The sum of two hybrid numbers is defined by summing their components. The Hybridian product

$$
 ZW=\left(a_1+b_1 i+c_1 \varepsilon+d_1 h\right)\left(a_2+b_2 i+c_2 \varepsilon+d_2 h\right)
$$
is obtained by distributing the terms on the right.

The conjugate of a hybrid number $Z=a+b i+c \varepsilon+d h$, denoted by $\bar{Z}$, is defined as
$$
\bar{Z}=a-b i-c \varepsilon-d h.
$$
The scalar product in the hybrid numbers is defined as follows

$$
\begin{aligned}
& g(p, q): K \times K \longrightarrow \mathbb{R} & \\
& g\left(Z_1, Z_2\right) =\frac{Z_1 \overline{Z_2} +Z_2 \overline{Z_1}}{2} \\
& =a_1 a_2+b_1 b_2-b_1 c_2-b_2 c_1-d_1 d_2
\end{aligned}
$$
where $Z_1=a_1+b_1 i+c_1 \varepsilon+d_1 h$ and $Z_2=a_2+b_2 i+c_2 \varepsilon+d_2 h$.
On the other hand, the vector product of two hybrid numbers $Z_1$ and $Z_2$ defined as

$$
\begin{aligned}
& x: K \times K \longrightarrow K \\
& Z_1 \times Z_2=\frac{Z_1 \overline{Z_2}-Z_2 \overline{Z_1}}{2}.
\end{aligned}
$$

\section{Hybrid Spinors}
In this section, we will introduce a new relationship between hybrid numbers and spinors. 
Let the set of Hybrid numbers be $\mathbb{K}$. Let us define the transformation that gives the relation between spinors and hybrid numbers as follows

$$
\chi :\mathbb{K} \to \mathbb{HS}
$$
$$
\chi \left( {a + bi + c\varepsilon  + dh} \right) = \left[ {\begin{array}{*{20}{c}}
{a + hc}\\
{\left( {c - b} \right) + hd}
\end{array}} \right].
$$

If we take that conjugate of the hybrid number $z$ is $z^*=a-b i-c \varepsilon-d h$, the hybrid spinor corresponding to conjugate hybrid number can be written as

$$
\chi \left( {a - bi - c\varepsilon  - dh} \right) = \left[ {\begin{array}{*{20}{c}}
{a - hc}\\
{\left( {b - c} \right) - hd}
\end{array}} \right] \equiv \overline {SH}.
$$

We associated to the product of two hybrid numbers $p \times q$, which is equal to the hybrid number -matrix product $L(p) q$, with a spinor matrix product as follows

$$
p \times q  \rightarrow L(p) q \rightarrow-h \hat{Q} H 
$$
where
$$
\hat{Q}  =\left[\begin{array}{cc}
a+h c & (b-c)+h d \\
(c-b)+h d & a-h c
\end{array}\right],
$$
and 
$$
H  =\left[\begin{array}{cc}
1 & 0 \\
0 & -1
\end{array}\right].
$$

\section{Hybrid Fibonacci and Lucas Spinors}
In this section, we will present the hybrid Fibonacci and Lucas spinors using a similar methodology. Additionally, we will investigate the characteristics of hybrid Fibonacci and Lucas spinors, deriving theorems and formulas that define these spinors.

The Fibonacci sequence \cite{koshy2019fibonacci} is defined by the recurrence relation

 $$F_{n+1}=F_n+F_{n-1}$$
with initial conditions $F_0=0, F_1=1$.

The Lucas sequence \cite{koshy2019fibonacci} is defined by the recurrence relation

 $$L_{n+1}=L_n+L_{n-1}$$
with initial conditions $L_0=2, L_1=1$.

$z=F_n+F_{n+1} i+F_{n+2} \varepsilon+F_{n+3} h$ be a  hybrid Fibonacci  number Let us define the transformation that gives the relation between  hybrid Fibonacci numbers and hybrid spinous as follows

$$
\chi: \mathbb{F} \longrightarrow \mathbb{HS} 
$$
$$
\begin{aligned}
\chi \left( {{F_n} + {F_{n + 1}}i + {F_{n + 2}}\varepsilon  + {F_{n + 3}}h} \right) = \left[ {\begin{array}{*{20}{c}}
{{F_n} + h{F_{n + 2}}}\\
{\left( {{F_{n + 2}} - {F_{n + 1}}} \right) + h{F_{n + 3}}}
\end{array}} \right] \equiv \overline {FSH_{n}}.
\end{aligned}
$$

We say that a new sequence is hybrid Fibonacci spinor sequence. Note that for $n \geq 0$,

$$
FSH_{n+2}=FSH_{n+1}+FSH_n
$$
can be written.
\begin{defn}
Let the conjugate of the hybrid Fibonacci number $HF_{n}$ be $\overline {H{F_n}}  = {F_n} - i{F_{n + 1}} - \varepsilon {F_{n + 2}} - h{F_{n + 3}}$. Furthermore from the correspondence, the  hybrid Fibonacci spinor $\overline {FSH_n} $ corresponding to the conjugate of the hybrid Fibonacci number is written by

$$
\chi \left( {\overline {H{F_n}} } \right) = \left[ {\begin{array}{*{20}{c}}
{{F_n} - h{F_{n + 2}}}\\
{\left( {{F_{n + 1}} - {F_{n + 2}}} \right) - h{F_{n + 3}}}
\end{array}} \right] = \overline {FS{H_n}}.
$$

Additionally, utilizing the conjugate definitions provided in the second section, we can provide the following conjugate definitions.
The hyperbolic conjugate of  hybrid Fibonacci spinor $FS{H_n}$ is

$$
FSH_n^* = \left[ {\begin{array}{*{20}{c}}
{{F_n} - h{F_{n + 2}}}\\
{\left( {{F_{n + 2}} - {F_{n + 1}}} \right) - h{F_{n + 3}}}
\end{array}} \right].
$$

 Hybrid Fibonacci spinor conjugate $F\tilde S{H_n} = hH\overline {FS{H_n}}$ of  hybrid Fibonacci spinor $FS{H_n}$ is

$$
F\tilde S{H_n} = \left[ {\begin{array}{*{20}{c}}
{ - {F_{n + 2}} + h{F_n}}\\
{{F_{n + 3}} + h\left( {{F_{n + 1}} - {F_{n + 2}}} \right)}
\end{array}} \right].
$$

The mate of  hybrid Fibonacci spinor $F\mathop S\limits^ \vee  {H_n} =  - H\overline {FS{H_n}}$ is

$$
F\mathop S\limits^ \vee  {H_n} = \left[ {\begin{array}{*{20}{c}}
{ - {F_n} + h{F_{n + 2}}}\\
{\left( {{F_{n + 2}} - {F_{n + 1}}} \right) - h{F_{n + 3}}}
\end{array}} \right]
$$
where $H=\left[\begin{array}{cc}1 & 0 \\ 0 & -1\end{array}\right]$.
\end{defn}

\begin{cor}
For the  hybrid Fibonacci spinor $FSH_n$ and its conjugates, following formulas are valid

\begin{itemize}
    \item $FS{H_n} + \overline {FS{H_n}}  = 2\left[ {\begin{array}{*{20}{c}}
{{F_n}}\\
0
\end{array}} \right]$,
\item $FS{H_n} + FS{H_n}^* = 2\left[ {\begin{array}{*{20}{c}}
{{F_n}}\\
{{F_n}}
\end{array}} \right] = 2{F_n}\left[ {\begin{array}{*{20}{c}}
1\\
1
\end{array}} \right]$,
\item $FS{H_n} + F\mathop S\limits^ \vee  {H_n} = 2\left[ {\begin{array}{*{20}{c}}
{h{F_{n + 2}}}\\
{{F_n}}
\end{array}} \right]$,
\item $hF\widetilde S{H_n} =  - F\mathop S\limits^ \vee  {H_n}$.
\end{itemize}
\end{cor}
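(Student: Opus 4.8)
The plan is to establish all four identities by direct componentwise computation, inserting the explicit $2\times 1$ matrix representations recorded in the preceding Definition and simplifying with the Fibonacci recurrence $F_{n+2}=F_{n+1}+F_n$. Everything needed is already fixed: $FSH_n$, $\overline{FSH_n}$, $FSH_n^{*}$, $F\widetilde S H_n$, and $F\mathop S\limits^{\vee} H_n$ each have their entries written out explicitly, so no further appeal to the transformation $\chi$ is required beyond reading off these columns.

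For the first three identities I would add the two relevant column spinors entry by entry. In the first identity, the top entries $F_n+hF_{n+2}$ and $F_n-hF_{n+2}$ add to $2F_n$, while the bottom entries $(F_{n+2}-F_{n+1})+hF_{n+3}$ and $(F_{n+1}-F_{n+2})-hF_{n+3}$ cancel completely, yielding $2[F_n,\,0]^{\mathsf T}$. The second identity is the same computation with $FSH_n^{*}$ in place of $\overline{FSH_n}$; the difference is that the real parts of the bottom entries now reinforce while the $h$-parts still cancel, and the one non-mechanical step is to invoke $F_{n+2}-F_{n+1}=F_n$ so that the bottom entry collapses to $2F_n$, giving $2F_n[1,1]^{\mathsf T}$. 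The third identity, adding $FSH_n$ and $F\mathop S\limits^{\vee} H_n$, combines both effects: the real parts of the top entries cancel to leave $2hF_{n+2}$, and the same recurrence turns the bottom entry into $2F_n$.

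The fourth identity is slightly different in flavour, since it is an algebraic relation between two spinors rather than a sum. Here I would start from $F\widetilde S H_n$ as displayed, multiply each entry by $h$, and reduce using $h^2=1$; this sends $-F_{n+2}+hF_n \mapsto F_n - hF_{n+2}$ and $F_{n+3}+h(F_{n+1}-F_{n+2}) \mapsto (F_{n+1}-F_{n+2}) + hF_{n+3}$. Comparing this column with $-F\mathop S\limits^{\vee} H_n$, obtained by negating the displayed mate, shows that the two agree entry for entry.

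The computations are entirely routine; the only points demanding attention are the systematic use of the recurrence in the form $F_{n+2}-F_{n+1}=F_n$ in the second and third identities, and the correct handling of the hyperbolic relation $h^2=1$ when distributing $h$ across the entries in the fourth. I therefore expect no genuine obstacle beyond careful bookkeeping, and the proof to amount to a short verification.
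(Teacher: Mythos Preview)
Your proposal is correct: each of the four identities follows by the direct componentwise verification you outline, using only the displayed definitions of the various conjugates together with $F_{n+2}-F_{n+1}=F_n$ and $h^2=1$. The paper itself states this corollary without proof, so your argument is exactly the routine verification the authors left to the reader.
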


\begin{thm}
The Binet formula for hybrid  Fibonacci spinor sequence is as follows:

$$
FSH_n=\left[\begin{array}{l}
\frac{1}{\sqrt{5}}+h\left(\frac{3+\sqrt{5}}{2 \sqrt{5}}\right)  \\
\frac{1}{\sqrt{5}}+h\left(\frac{2+\sqrt{5}}{\sqrt{5}}\right) 
\end{array}\right] \alpha^n+\left[\begin{array}{l}
-\frac{1}{\sqrt{5}}-h\left(\frac{3-\sqrt{5}}{2 \sqrt{5}}\right)  \\
-\frac{1}{\sqrt{5}}-h\left(\frac{2-\sqrt{5}}{\sqrt{5}}\right) 
\end{array}\right] \beta^n
$$
where $\alpha  = \frac{{1 + \sqrt 5 }}{2}$ and  $\beta  = \frac{{1 - \sqrt 5 }}{2}$.
\end{thm}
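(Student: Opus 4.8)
The plan is to reduce the statement to the classical Binet formula $F_m = (\alpha^m - \beta^m)/\sqrt5$ applied entrywise. First I would write the two components of $FSH_n$ explicitly from the definition: the top entry is $F_n + hF_{n+2}$ and the bottom entry is $(F_{n+2}-F_{n+1}) + hF_{n+3}$. Since $F_{n+2}-F_{n+1}=F_n$ by the Fibonacci recurrence, the bottom entry simplifies to $F_n + hF_{n+3}$; recording this simplification early removes one index and makes the subsequent coefficient matching cleaner.

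Next I would substitute the Binet expression for each of $F_n$, $F_{n+2}$, $F_{n+3}$ and factor out $\alpha^n$ and $\beta^n$. The key algebraic inputs are the closed forms of the low powers of the golden ratio, namely $\alpha^2 = (3+\sqrt5)/2$, $\beta^2 = (3-\sqrt5)/2$, $\alpha^3 = 2+\sqrt5$, $\beta^3 = 2-\sqrt5$, which follow from $\alpha^2=\alpha+1$ and $\alpha^3 = 2\alpha+1$ (and the analogous identities for $\beta$). Using these, $F_{n+2} = (\alpha^2\alpha^n - \beta^2\beta^n)/\sqrt5$ and $F_{n+3} = (\alpha^3\alpha^n - \beta^3\beta^n)/\sqrt5$, so each entry of $FSH_n$ becomes a combination $P\alpha^n + Q\beta^n$ whose coefficients $P,Q$ are scalars-plus-$h$. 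Collecting terms, the coefficient of $\alpha^n$ in the top entry is $\tfrac{1}{\sqrt5} + h\tfrac{3+\sqrt5}{2\sqrt5}$ and in the bottom entry is $\tfrac{1}{\sqrt5} + h\tfrac{2+\sqrt5}{\sqrt5}$, while the coefficients of $\beta^n$ are the stated negatives obtained by sending $\sqrt5 \mapsto -\sqrt5$ in the hyperbolic parts; assembling the two entries into a column vector yields precisely the two constant spinors multiplying $\alpha^n$ and $\beta^n$ in the claim.

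There is no serious conceptual obstacle here: the entire argument is a finite computation, and the only place where care is needed is the bookkeeping of the hyperbolic unit $h$ as it rides along each Fibonacci coefficient, together with correctly reducing $\alpha^2,\alpha^3$ and their $\beta$ analogues. As a cross-check, and in fact as an independent proof, I would note that $FSH_n$ satisfies the second-order recurrence $FSH_{n+2} = FSH_{n+1} + FSH_n$ already recorded above, so its general term must have the form $A\alpha^n + B\beta^n$ for constant spinors $A,B$. Evaluating at $n=0,1$, where $FSH_0$ has top entry $h$ and bottom entry $2h$, and $FSH_1$ is computed from $F_1,\dots,F_4$ to have top entry $1+2h$ and bottom entry $1+3h$, and then solving the resulting linear system for $A$ and $B$, recovers the same two constant spinors and confirms the formula.
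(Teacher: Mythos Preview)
Your proposal is correct. Your primary route---substituting the classical Binet formula $F_m=(\alpha^m-\beta^m)/\sqrt5$ into each entry of $FSH_n$, simplifying the bottom entry via $F_{n+2}-F_{n+1}=F_n$, and reading off the coefficients of $\alpha^n$ and $\beta^n$ using $\alpha^2=(3+\sqrt5)/2$, $\alpha^3=2+\sqrt5$---is not the argument the paper gives. The paper instead invokes the recurrence $FSH_{n+2}=FSH_{n+1}+FSH_n$, writes the general solution as $SH_A\,\alpha^n+SH_B\,\beta^n$ with undetermined spinor constants, and solves for $SH_A,SH_B$ from the values at $n=0,1$; this is precisely what you present as your cross-check. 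Your direct-substitution approach has the merit of making transparent exactly where each numerical coefficient (the $3\pm\sqrt5$ and $2\pm\sqrt5$) originates, while the paper's initial-value method is a reusable template that applies verbatim to any sequence of spinors satisfying this recurrence. One small remark: your description of the $\beta^n$ coefficients as ``obtained by sending $\sqrt5\mapsto-\sqrt5$ in the hyperbolic parts'' is slightly loose---the substitution must be made in the full coefficient, including the $1/\sqrt5$ real part, which is indeed how the overall sign flip arises.
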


\begin{proof}
The characteristic equation of the recurrence relation of hybrid Fibonacci spinor sequence is $x^2-x-1=0$. The roots of are this equation are $\alpha=\frac{1+\sqrt{5}}{2}, \beta=\frac{1-\sqrt{5}}{2}$.
The Binet's formula for hybrid Fibonacci spinor is as follows
$$
FSH_n=S H_A \alpha^n+S H_B \beta^n
$$
where $S H_A=\left[\begin{array}{l}a_1 \\ a_2\end{array}\right], \mathrm{SH}_B=\left[\begin{array}{l}b_1 \\ b_2\end{array}\right] \quad 2 \times 1$ matrices.
If we write this equation for $n=0$ and $n=1$, we obtain the Binet formula as

$$
FSH_n=\frac{1}{2 \sqrt{5}}\left(\left[\begin{array}{l}
2+(3 \sqrt{5}) h \\
2+(4+2 \sqrt{5}) h
\end{array}\right] \alpha^n-\left[\begin{array}{c}
2+(3-\sqrt{5}) h \\
2+(4-2 \sqrt{5}) h
\end{array}\right] \beta^n\right) \text {. }
$$
\end{proof}

\begin{thm}
The generating function for hybrid Fibonacci spinor $FSH_n$ is

$$
{G_{FS{H_n}}}\left( x \right) = \frac{1}{{1 - x - {x^2}}}\left[ {\begin{array}{*{20}{c}}
{1 + h\left( {3 - x} \right)}\\
{1 + h\left( {5 - 2x} \right)}
\end{array}} \right].
$$
\end{thm}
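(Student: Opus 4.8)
The plan is to treat the generating function $G_{FSH_n}(x)=\sum_{n\ge 0}FSH_n\,x^n$ as a formal power series whose coefficients are the $2\times 1$ spinors $FSH_n$, each entry of which lives in the commutative ring $\mathbb{R}[h]$ with $h^2=1$. Since $h$ commutes with the real scalars and with the indeterminate $x$, the classical rational-generating-function argument for a second-order linear recurrence applies entrywise, and the only genuine inputs are the recurrence $FSH_{n+2}=FSH_{n+1}+FSH_n$ (valid for $n\ge 0$, as recorded in Section~4) together with the two seed spinors $FSH_0$ and $FSH_1$.

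First I would multiply $G_{FSH_n}(x)$ by $1-x-x^2$ and reindex the three resulting sums so that they share a common power $x^n$. Writing
\[
(1-x-x^2)G_{FSH_n}(x)=\sum_{n\ge 0}FSH_n x^n-\sum_{n\ge 1}FSH_{n-1}x^n-\sum_{n\ge 2}FSH_{n-2}x^n,
\]
the coefficient of $x^n$ for every $n\ge 2$ equals $FSH_n-FSH_{n-1}-FSH_{n-2}$, which vanishes by the recurrence. Hence all but the two lowest-order contributions cancel, leaving
\[
(1-x-x^2)G_{FSH_n}(x)=FSH_0+(FSH_1-FSH_0)x.
\]

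It then remains to evaluate the numerator. Using $F_0=0,\;F_1=1,\;F_2=1,\;F_3=2,\;F_4=3$ in the defining formula $FSH_n=\begin{bmatrix}F_n+hF_{n+2}\\ (F_{n+2}-F_{n+1})+hF_{n+3}\end{bmatrix}$, I would compute $FSH_0$ and $FSH_1$ explicitly, simplify second entries via $F_{n+2}-F_{n+1}=F_n$, assemble the constant vector $FSH_0$ and the linear coefficient $FSH_1-FSH_0$, and divide by $1-x-x^2$ to obtain the claimed closed form. The work here is purely mechanical, so the main obstacle is bookkeeping: keeping the two spinor components and the hyperbolic unit $h$ straight while combining the constant and linear parts of the numerator. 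As a safeguard I would cross-check the first few series coefficients of the result against the values predicted by the Binet formula established in the preceding theorem, since those values pin down the correct constant term and confirm that the numerator has been assembled consistently.
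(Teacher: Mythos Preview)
Your argument is exactly the paper's: use the recurrence $FSH_{n+2}=FSH_{n+1}+FSH_n$ to collapse $(1-x-x^2)G(x)$ to its constant and linear terms, then plug in the seeds $FSH_0$ and $FSH_1$. Nothing is missing from the method.

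However, the very last step, ``divide by $1-x-x^2$ to obtain the claimed closed form,'' will not work, because the displayed formula in the theorem is misprinted. Carrying out your plan with $FSH_0=\begin{bmatrix}h\\2h\end{bmatrix}$ and $FSH_1=\begin{bmatrix}1+2h\\1+3h\end{bmatrix}$ (the values the paper itself records) gives
\[
(1-x-x^2)G_{FSH_n}(x)=FSH_0+(FSH_1-FSH_0)x=\begin{bmatrix}x+h(1+x)\\ x+h(2+x)\end{bmatrix},
\]
not $\begin{bmatrix}1+h(3-x)\\ 1+h(5-2x)\end{bmatrix}$. Your proposed cross-check catches this instantly: evaluating the stated generating function at $x=0$ yields $\begin{bmatrix}1+3h\\1+5h\end{bmatrix}$, which is $FSH_2$, not $FSH_0$. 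The paper's proof uses the same steps as yours and the same initial values, but slips in the final assembly of the numerator; the method is sound, the printed answer is not.
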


\begin{proof}
The generating function $h(x)$ for hybrid Fibonacci spinor can be writer as follows:
$$
h(x)=\sum_{n=0}^{\infty} FSH_n x^n
$$

If the recurrence relation of hybrid Fibonacci spinor sequence is considered, we have
$$
\sum_{n=0}^{\infty} FSH_{n+2} x^n=\sum_{n=0}^{\infty} FSH_{n+1} x^n+\sum_{n=0}^{\infty} FSH_n x^n .
$$

Then, we can write
$$
\sum_{n=2}^{\infty} FSH_n x^{n-2}=\sum_{n=1}^{\infty} FSH_n x^{n-1}+\sum_{n=0}^{\infty} FSH_n x^n
$$
$$
\frac{1}{x^2}\left(-FSH_0-FSH_1+h(x)\right)=\frac{1}{x}\left(-FSH_0+h(x)\right)+h(x) .
$$

Since the first two terms of hybrid Fibonacci spinor sequence are
$$
\mathrm{FSH}_0=\left[\begin{array}{c}
h \\
2 h
\end{array}\right], \mathrm{FSH}_1=\left[\begin{array}{l}
1+2 h \\
1+3 h
\end{array}\right]
$$
we get,
$$
h(x)=\frac{1}{1-x-x^2}\left[\begin{array}{l}
1+h(3-x) \\
1+h(5-2 x)
\end{array}\right] .
$$
\end{proof}

The hybrid Fibonacci spinor matrix is defined as follows

$$
Q_h=\left[\begin{array}{lr}
F_n+h F_{n+2} & \left(F_{n+1}-F_{n+2}\right)+h F_{n+3} \\
\left(F_{n+2}-F_{n+1}\right)+h F_{n+3} & F_n-h F_{n+2}
\end{array}\right].
$$

\begin{thm}
For $n \geq 1$, we have Cassini formula for hybrid Fibonacci spinous
$$
\operatorname{det}\left(Q_n\right)=-F_{2 n+5}+2 F_n^2.
$$
\end{thm}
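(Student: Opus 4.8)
The plan is to evaluate the $2\times 2$ determinant $\det(Q_n) = AD - BC$ directly, where $A = F_n + hF_{n+2}$ and $D = F_n - hF_{n+2}$ are the diagonal entries, while $B = (F_{n+1}-F_{n+2}) + hF_{n+3}$ and $C = (F_{n+2}-F_{n+1}) + hF_{n+3}$ are the antidiagonal entries. The first thing I would record is that every entry of $Q_n$ lies in the subalgebra of $\mathbb{K}$ generated by $1$ and $h$; since $h$ commutes with the reals and $h^2 = 1$, this subalgebra is exactly the commutative split-complex ring, so the ordinary determinant formula and the usual rules of algebra apply without any concern about the noncommutativity of the full hybridian product.

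For the diagonal product I would use the conjugate-type factorization $AD = (F_n + hF_{n+2})(F_n - hF_{n+2}) = F_n^2 - h^2 F_{n+2}^2 = F_n^2 - F_{n+2}^2$. For the antidiagonal product, set $x = F_{n+1} - F_{n+2}$, so that $B = x + hF_{n+3}$ and $C = -x + hF_{n+3}$; expanding, the two cross terms $\pm\, x h F_{n+3}$ cancel and $h^2 = 1$ gives $BC = F_{n+3}^2 - x^2$. The key simplification is $x = F_{n+1} - F_{n+2} = -F_n$ by the Fibonacci recurrence, whence $x^2 = F_n^2$ and $BC = F_{n+3}^2 - F_n^2$.

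Subtracting, I obtain $\det(Q_n) = (F_n^2 - F_{n+2}^2) - (F_{n+3}^2 - F_n^2) = 2F_n^2 - (F_{n+2}^2 + F_{n+3}^2)$. The final step is to invoke the classical identity $F_m^2 + F_{m+1}^2 = F_{2m+1}$ with $m = n+2$, which yields $F_{n+2}^2 + F_{n+3}^2 = F_{2n+5}$ and hence $\det(Q_n) = 2F_n^2 - F_{2n+5}$, exactly the claimed formula. None of these steps poses a genuine obstacle; the only two points deserving care are the commutativity remark (which guarantees that the determinant is well defined and can be manipulated as in the real case) and recalling—or quickly verifying by induction—the sum-of-squares identity $F_m^2 + F_{m+1}^2 = F_{2m+1}$ that collapses the quadratic expression into a single Fibonacci number.
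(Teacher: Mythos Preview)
Your argument is correct: the entries of $Q_n$ lie in the commutative split-complex subalgebra $\mathbb{R}[h]$, the two products $AD$ and $BC$ simplify via $h^2=1$ exactly as you describe, and the identity $F_{m}^2+F_{m+1}^2=F_{2m+1}$ at $m=n+2$ collapses the result to $-F_{2n+5}+2F_n^2$. The paper states this theorem without proof, so there is no argument to compare against; your direct computation is a complete and appropriate justification.
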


Now, we express hybrid Lucas spinous.
Let $z_n^{\prime}=L_n+L_{n+1} i+L_{n+2} \varepsilon+L_{n+3} h$ be the $n t h$ Lucas hybrid number where $L_n$ nth Lucas number.

Hybrid Lucas spinor$ LSH_n$ corresponding to $n t h$  hybrid Lucas number $z_n^{\prime}$ is
$$
\begin{aligned}
z_n^{\prime} \rightarrow f\left(L_n+L_{n+1} i+L_{n+2} \varepsilon+L_{n+3} h\right) & =\left[\begin{array}{l}
L_n+h L_{n+2} \\
L_n+h L_{n+3}
\end{array}\right] \\
& \equiv LSH_n .
\end{aligned}
$$
\begin{thm}
 For $n \geq 0$, the Binet formula for $n$th  hybrid Lucas spinor $LSH_n$ is \\
$$
LSH_n=A \alpha^n+B \beta^n,
$$
where
$$
A=\left[\begin{array}{c}
\frac{2+h(3 \sqrt{5})}{2} \\
1+(2+\sqrt{5}) h
\end{array}\right], \quad B=\left[\begin{array}{l}
\frac{3-\sqrt{5} h}{2} \\
1+(2-\sqrt{5}) h
\end{array}\right].
$$
\end{thm}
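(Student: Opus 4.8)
The plan is to establish the Binet formula by the same strategy used for the hybrid Fibonacci spinor, exploiting that $LSH_n$ inherits the Fibonacci-type recurrence. Because the Lucas numbers satisfy $L_{n+2}=L_{n+1}+L_n$, the defining column vector $LSH_n = \begin{bmatrix} L_n + hL_{n+2} \\ L_n + hL_{n+3}\end{bmatrix}$ satisfies $LSH_{n+2}=LSH_{n+1}+LSH_n$ entrywise, so its characteristic equation is $x^2-x-1=0$ with roots $\alpha,\beta$. Hence $LSH_n = A\alpha^n + B\beta^n$ for constant $2\times 1$ matrices $A,B$, and it remains only to determine them. I would do this by the most transparent route: substitute the classical Lucas Binet formula $L_n=\alpha^n+\beta^n$ directly into each entry and collect the coefficients of $\alpha^n$ and $\beta^n$.

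First I would record $L_{n+k}=\alpha^k\alpha^n+\beta^k\beta^n$, which converts the top entry of $LSH_n$ into $(1+h\alpha^2)\alpha^n+(1+h\beta^2)\beta^n$ and the bottom entry into $(1+h\alpha^3)\alpha^n+(1+h\beta^3)\beta^n$. Grouping the $\alpha^n$-coefficients gives $A$ and the $\beta^n$-coefficients gives $B$, so the whole problem reduces to simplifying four scalar powers of $\alpha$ and $\beta$.

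Next I would reduce those powers using the defining relation $x^2=x+1$: namely $\alpha^2=\tfrac{3+\sqrt5}{2}$, $\beta^2=\tfrac{3-\sqrt5}{2}$, $\alpha^3=2\alpha+1=2+\sqrt5$, and $\beta^3=2\beta+1=2-\sqrt5$. Inserting these yields the column matrices $A$ and $B$ of the statement. As an independent cross-check, and to mirror the earlier proof, I would confirm that these same $A,B$ solve the linear system $A+B=LSH_0$, $A\alpha+B\beta=LSH_1$, using the initial data $LSH_0=\begin{bmatrix} 2+3h \\ 2+4h\end{bmatrix}$, $LSH_1=\begin{bmatrix} 1+4h \\ 1+7h\end{bmatrix}$ and $\alpha-\beta=\sqrt5$.

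There is no genuine conceptual obstacle; the argument is purely computational once the recurrence and its roots are in place. The step that demands real care is the bookkeeping of the hyperbolic-unit coefficients when reducing $1+h\alpha^2$ and $1+h\beta^2$ to closed form — for instance the simplification $\tfrac{5+3\sqrt5}{2\sqrt5}=\tfrac{3+\sqrt5}{2}$ and its $\beta$-analogue must be carried out without sign or coefficient slips. Recomputing $A$ and $B$ from the initial values $LSH_0,LSH_1$ then serves as a reliable safeguard that the constant matrices have been assembled correctly.
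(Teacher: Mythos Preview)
Your argument is correct. The paper does not actually supply a proof of this theorem; it is stated immediately after the Fibonacci Binet formula and left implicit, so the intended method is the one used there: write $LSH_n=A\alpha^n+B\beta^n$ and solve for $A,B$ from $LSH_0$ and $LSH_1$. You include this as your cross-check, so in that sense you match the paper, but your primary route---substituting the classical identity $L_n=\alpha^n+\beta^n$ directly into each entry and reading off the $\alpha^n$- and $\beta^n$-coefficients---is a cleaner shortcut that bypasses solving a linear system. One remark: your computation in fact yields
\[
A=\begin{bmatrix}1+h\,\dfrac{3+\sqrt5}{2}\\[4pt]1+(2+\sqrt5)h\end{bmatrix},\qquad
B=\begin{bmatrix}1+h\,\dfrac{3-\sqrt5}{2}\\[4pt]1+(2-\sqrt5)h\end{bmatrix},
\]
and the top entries in the paper's displayed $A$ and $B$ (with ``$3\sqrt5$'' and ``$3-\sqrt5 h$'') are evidently misprints for $3+\sqrt5$ and $2+(3-\sqrt5)h$; your initial-value check with $LSH_0=\left[\begin{smallmatrix}2+3h\\2+4h\end{smallmatrix}\right]$ confirms the corrected values.
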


\begin{thm}
The generating function for hybrid Lucas spinor $LSH_n$ is \\
$$
g^{\prime}(x)=\frac{1}{1-x-x^2}\left(\left[\begin{array}{l}
3+7 h \\
3+11 h
\end{array}\right]-x\left[\begin{array}{l}
2+3 h \\
2+4 h
\end{array}\right]\right)
$$.
\end{thm}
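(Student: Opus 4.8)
The plan is to reproduce, for the Lucas case, the generating-function argument already carried out for $FSH_n$ in the preceding theorem. I set $g'(x)=\sum_{n=0}^{\infty} LSH_n x^n$, treating the coefficients as $2\times 1$ matrices with entries in $\mathbb{R}[h]$. The first point to record is that the hybrid Lucas spinor sequence obeys the same second-order recurrence $LSH_{n+2}=LSH_{n+1}+LSH_n$: each entry of $LSH_n$ is an $\mathbb{R}[h]$-linear combination of $L_n$, $L_{n+2}$, $L_{n+3}$, and since the Lucas numbers satisfy $L_{m+2}=L_{m+1}+L_m$, the recurrence passes to the spinor entries componentwise.

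Next I convert this recurrence into an identity between formal power series. Starting from $\sum_{n\ge 0} LSH_{n+2}x^n=\sum_{n\ge 0} LSH_{n+1}x^n+\sum_{n\ge 0} LSH_n x^n$, I re-index each sum and express it through $g'(x)$ together with the two seed terms $LSH_0$ and $LSH_1$, exactly as in the $FSH_n$ proof. Collecting all terms on one side yields the clean relation
$$
(1-x-x^2)\,g'(x)=LSH_0+(LSH_1-LSH_0)\,x,
$$
which already exhibits the denominator $1-x-x^2$ appearing in the statement.

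It then remains to evaluate the two initial spinors. From the defining correspondence, whose first and second components are $L_n+hL_{n+2}$ and $L_n+hL_{n+3}$, together with the Lucas values $L_0=2$, $L_1=1$, $L_2=3$, $L_3=4$, $L_4=7$, I read off $LSH_0$ and $LSH_1$ explicitly, substitute them into the numerator $LSH_0+(LSH_1-LSH_0)x$, and simplify the hyperbolic parts entrywise. Dividing by $1-x-x^2$ then produces the asserted closed form for $g'(x)$.

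The power-series manipulation is routine and formally identical to the Fibonacci computation. The step most prone to error is the bookkeeping of the boundary terms after the index shifts: one must check that only $LSH_0$ and $LSH_1$ survive and that they enter with the correct coefficients $1$ and $x$, and then carry out the entrywise combination of the $h$-components without sign slips. Pinning down these seed spinors and their difference is precisely what determines both vector coefficients in the numerator, so that is where I would concentrate the care.
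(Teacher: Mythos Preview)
Your approach is exactly the one the paper uses for the Fibonacci case (the paper gives no separate proof for the Lucas statement), and the identity $(1-x-x^2)\,g'(x)=LSH_0+(LSH_1-LSH_0)\,x$ is derived correctly. The gap is in your last sentence: substituting the seed spinors does \emph{not} produce the asserted closed form. With $L_0=2$, $L_1=1$, $L_2=3$, $L_3=4$, $L_4=7$ one gets
\[
LSH_0=\begin{bmatrix}2+3h\\2+4h\end{bmatrix},\qquad
LSH_1=\begin{bmatrix}1+4h\\1+7h\end{bmatrix},\qquad
LSH_0+(LSH_1-LSH_0)\,x=\begin{bmatrix}2+3h\\2+4h\end{bmatrix}+x\begin{bmatrix}-1+h\\-1+3h\end{bmatrix},
\]
whereas the numerator displayed in the theorem is
\[
\begin{bmatrix}3+7h\\3+11h\end{bmatrix}-x\begin{bmatrix}2+3h\\2+4h\end{bmatrix}=(LSH_0+LSH_1)-LSH_0\,x.
\]
These disagree already at $x=0$, where your (correct) expression gives $g'(0)=LSH_0$, as it must, while the theorem's formula gives $LSH_0+LSH_1$.

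The same slip is present in the paper's Fibonacci generating function: evaluating that formula at $x=0$ yields $\begin{bmatrix}1+3h\\1+5h\end{bmatrix}=FSH_0+FSH_1$ rather than $FSH_0=\begin{bmatrix}h\\2h\end{bmatrix}$. So your method is sound, but carrying it out will correct, not confirm, the stated numerator; you should flag this rather than claim that the substitution ``produces the asserted closed form''.
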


\begin{thm}
For $n \geq 2$, we have
\begin{description}
\item[i]  $FSH_{n+2}-FSH_{n-2}=LSH_n$, \\
\item[ii] $5 FSH_n+LSH_n=2LSH_{n+1}$.
\end{description}
\end{thm}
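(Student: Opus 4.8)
The plan is to reduce each spinor identity to a short list of classical scalar Fibonacci–Lucas identities, handled entry by entry. First I would record the explicit entries of both spinors. Using the recurrence $F_{n+2}=F_{n+1}+F_n$, the lower entry $(F_{n+2}-F_{n+1})+hF_{n+3}$ of $FSH_n$ collapses to $F_n+hF_{n+3}$ (and likewise $L_{n+2}-L_{n+1}=L_n$ for the Lucas case), so that
$$FSH_n=\begin{bmatrix} F_n+hF_{n+2} \\ F_n+hF_{n+3}\end{bmatrix},\qquad LSH_n=\begin{bmatrix} L_n+hL_{n+2}\\ L_n+hL_{n+3}\end{bmatrix}.$$
Since spinor addition and subtraction are entrywise and the hyperbolic unit $h$ commutes with the real coefficients, each stated identity splits into independent scalar identities for the $h^0$-part and the $h^1$-part of each of the two entries. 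Thus the whole theorem is equivalent to a handful of identities in $(F_n)$ and $(L_n)$.

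For part (i), comparing entries of $FSH_{n+2}-FSH_{n-2}$ with $LSH_n$ shows the claim holds precisely when
$$F_{n+2}-F_{n-2}=L_n,\quad F_{n+4}-F_n=L_{n+2},\quad F_{n+5}-F_{n+1}=L_{n+3},$$
where the first comes from the $h^0$-parts (identical in both entries) and the other two from the $h^1$-parts of the top and bottom entries. All three are instances of the single identity $F_{m+2}-F_{m-2}=L_m$ at $m\in\{n,n+2,n+3\}$, which I would prove by writing $F_{m+2}=F_{m+1}+F_m$ and $F_{m-2}=F_m-F_{m-1}$, whence $F_{m+2}-F_{m-2}=F_{m+1}+F_{m-1}=L_m$; the hypothesis $n\ge 2$ keeps all indices nonnegative.

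For part (ii), comparing entries of $5FSH_n+LSH_n$ with $2LSH_{n+1}$ reduces the statement to
$$5F_m+L_m=2L_{m+1}\quad\text{for }m\in\{n,n+2,n+3\}.$$
This follows from the companion identity $5F_m=L_{m+1}+L_{m-1}$ together with $L_m=L_{m+1}-L_{m-1}$ (a rearrangement of the Lucas recurrence): adding the two gives $5F_m+L_m=2L_{m+1}$. I expect the only genuine work to be bookkeeping, namely verifying that the index shifts produced by the $h$-terms and by the $\pm2$ offsets in part (i) align with the correct entries of $LSH_n$ and $LSH_{n+1}$; there is no conceptual obstacle, only the risk of an index slip. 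The two classical inputs $F_{m+2}-F_{m-2}=L_m$ and $5F_m=L_{m-1}+L_{m+1}$ may be taken as known, or established in one line from the Binet formulas $F_m=(\alpha^m-\beta^m)/\sqrt5$ and $L_m=\alpha^m+\beta^m$ using the relations $\alpha+\beta=1$ and $\alpha\beta=-1$ (so $\alpha-\beta=\sqrt5$).
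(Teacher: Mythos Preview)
Your argument is correct. Your entrywise reduction works exactly as described: the simplification of the lower entry to $F_n+hF_{n+3}$ (and analogously for the Lucas spinor) is valid by the recurrence, and the resulting scalar identities $F_{m+2}-F_{m-2}=L_m$ and $5F_m+L_m=2L_{m+1}$ at the shifted indices $m\in\{n,n+2,n+3\}$ are precisely what is needed and are established cleanly.

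The paper, however, argues differently: it writes $FSH_n=\frac{1}{\sqrt5}(A\alpha^n-B\beta^n)$ and $LSH_n=A\alpha^n+B\beta^n$ using the spinor Binet formulas derived earlier, and then manipulates powers of $\alpha$ and $\beta$ directly (e.g.\ factoring out $\alpha^n,\beta^n$ and using $\alpha^2-\alpha^{-2}=\sqrt5$, $\beta^2-\beta^{-2}=-\sqrt5$). Your route is more elementary and self-contained, needing only the recurrences and the single relation $L_m=F_{m+1}+F_{m-1}$; it also makes transparent that the spinor identity is nothing more than three shifted copies of one scalar identity. The paper's Binet approach, by contrast, treats the spinor as a single object and never unpacks the entries, which is tidier once the Binet representation is in hand but relies on the earlier machinery. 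Both are short; yours is arguably the more robust of the two, since it avoids any risk of sign or normalization slips in the spinor Binet constants $A,B$.
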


\begin{proof}
$$
\begin{aligned} (i) 
\operatorname{FSH} & =\frac{1}{\sqrt{5}}\left(A \alpha^n-B \beta^n\right) \\
FSH_{n+2}-FSH_{n-2} & =\frac{1}{\sqrt{5}}\left(A \alpha^n\left(\alpha^2+\alpha^{-2}\right)-B \beta^n\left(\beta^2-\beta^{-2}\right)\right) \\
& =\frac{1}{\sqrt{5}}\left(A \alpha^n\left(\frac{3+\sqrt{5}}{2}-\frac{3-\sqrt{5}}{2}\right)-B \beta^n\left(\frac{3-\sqrt{5}}{2}-\frac{3+\sqrt{5}}{2}\right)\right) \\
& =\frac{1}{\sqrt{5}}\left(A \alpha^n-\sqrt{5}+B \beta^n \sqrt{5}\right)=A \alpha^n+B \beta^n=LSH_n .
\end{aligned}
$$

The second identity is proved in a similar way as the frst.
\end{proof}

\begin{thm}
For $n \geq 1$, we have
\begin{description}
\item[i]  $L_{n+1} FSH_n+L_n FSH _{n-1}=LSH_{2 n}$,\\
\item[ii] $F_{n+1} FSH_n+F_n FSH_{n-1}=FSH_{2 n}$.
\end{description}
\end{thm}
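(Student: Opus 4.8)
The plan is to reduce both identities to the Binet representations already in play, exactly as in the proof of the preceding theorem. From that proof we may take
$$FSH_n = \frac{1}{\sqrt{5}}\left(A\alpha^n - B\beta^n\right), \qquad LSH_n = A\alpha^n + B\beta^n,$$
where $A,B$ are the constant $2\times 1$ spinor matrices appearing in the Binet formula for $LSH_n$, and we combine these with the classical scalar Binet formulas $F_n = \frac{\alpha^n - \beta^n}{\sqrt{5}}$ and $L_n = \alpha^n + \beta^n$. The whole argument then rests on three elementary facts about the roots: $\alpha\beta = -1$, $\alpha^2 + 1 = \sqrt{5}\,\alpha$, and $\beta^2 + 1 = -\sqrt{5}\,\beta$. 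I would verify these three scalar identities first, since everything else is formal manipulation.

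For part (i) I would substitute $L_{n+1} = \alpha^{n+1} + \beta^{n+1}$ and $L_n = \alpha^n + \beta^n$ into $L_{n+1}FSH_n + L_nFSH_{n-1}$ and expand against $FSH_n = \frac{1}{\sqrt5}(A\alpha^n - B\beta^n)$ and $FSH_{n-1} = \frac{1}{\sqrt5}(A\alpha^{n-1} - B\beta^{n-1})$. The expansion produces four ``pure'' terms in $A\alpha^{2n\pm1}$ and $B\beta^{2n\pm1}$, together with mixed terms carrying a factor $\alpha^j\beta^k$ with $j+k$ odd; using $\alpha\beta = -1$ these mixed terms collapse to multiples of $(-1)^n + (-1)^{n-1} = 0$ and hence vanish. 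Collecting the survivors gives $\frac{1}{\sqrt5}\big(A\alpha^{2n-1}(\alpha^2+1) - B\beta^{2n-1}(\beta^2+1)\big)$, and the two scalar identities $\alpha^2+1 = \sqrt5\,\alpha$, $\beta^2+1 = -\sqrt5\,\beta$ turn this into $A\alpha^{2n} + B\beta^{2n} = LSH_{2n}$.

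For part (ii) the computation is identical in shape, the only change being that $F_{n+1}$ and $F_n$ contribute the classical $\frac{1}{\sqrt5}$ factors instead of the $L$-coefficients, so the prefactor becomes $\frac{1}{5}$ and the sign in front of the $B$-terms is reversed relative to part (i). The same cancellation of mixed terms (again via $\alpha\beta=-1$) and the same two scalar identities then yield $\frac{1}{\sqrt5}\big(A\alpha^{2n} - B\beta^{2n}\big) = FSH_{2n}$, where the surviving minus sign is precisely the one coming from $\beta^2+1 = -\sqrt5\,\beta$.

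I expect the only real obstacle to be the bookkeeping in the expansion: one must keep careful track of the signs and of which mixed products pair up so that the $(-1)^n + (-1)^{n-1}$ cancellation is visible, and must be confident that the constant spinors $A,B$ factor out untouched (they commute with the scalar powers of $\alpha,\beta$, so this is safe). An alternative, if one prefers to avoid the algebra, is induction on $n$ using $FSH_{n+2} = FSH_{n+1} + FSH_n$ together with the Fibonacci and Lucas recurrences; but the Binet route above is shorter and makes the parallel between (i) and (ii) transparent.
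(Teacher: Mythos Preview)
Your proposal is correct and follows essentially the same Binet-formula approach as the paper, which likewise substitutes $FSH_n=\tfrac{1}{\sqrt5}(A\alpha^n-B\beta^n)$, $LSH_n=A\alpha^n+B\beta^n$, $L_n=\alpha^n+\beta^n$, and $F_n=(\alpha^n-\beta^n)/(\alpha-\beta)$ and simplifies. Your explicit isolation of the mixed terms and their cancellation via $(-1)^n+(-1)^{n-1}=0$ is in fact more transparent than the paper's version, which passes over that step without comment.
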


\begin{proof}
(i) The Binet formula for Lucas sequence is $L_{n}=\alpha ^{n}+\beta ^{n}$. We
can write
\begin{eqnarray*}
LSH_{2n} &=&\frac{A\left( \alpha ^{2n+1}-\beta \alpha ^{2n}\right) +B(\alpha
\beta ^{2n}-\beta ^{2n+1})}{\alpha -\beta } \\
&=&\frac{\left( A\alpha ^{n}-B\beta ^{n}\right) (\alpha ^{n}\alpha +\beta
^{n}\beta )}{2\left( \alpha -\beta \right) }+\frac{\left( A\alpha
^{n-1}-B\beta ^{n-1}\right) (\alpha ^{n}+\beta ^{n})}{2\left( \alpha -\beta
\right) } \\
&=&(\alpha ^{n+1}+\beta ^{n+1})\left( \frac{1}{2(\alpha -\beta )}\left(
A\alpha ^{n}-B\beta ^{n}\right) \right)  \\
&&+\left( \alpha ^{n}+\beta ^{n}\right) \left( \frac{1}{2(\alpha -\beta )}%
\left( A\alpha ^{n-1}-B\beta ^{n-1}\right) \right)  \\
&=&L_{n+1}FSH_{n}+L_{n}FSH_{n-1}.
\end{eqnarray*}

(ii) The Binet formula for Fibonacci sequence is
$$
F_n=\frac{\alpha^n-\beta^n}{\alpha-\beta},
$$
where $\alpha=\frac{1+\sqrt{5}}{2}, \beta=\frac{1-\sqrt{5}}{2}$. Then, $w e$ can write
$$
\begin{aligned}
& F_{n+1} FSH_n+F_n FSH_{n-1}= \\
& \frac{\alpha^{n+1}-\beta^{n+1}}{\alpha-\beta} \cdot \frac{1}{\sqrt{5}}\left(A \alpha^n-B \beta^n\right)+\frac{\alpha^n-\beta^n}{\alpha-\beta} \cdot \frac{1}{\sqrt{5}} \cdot\left(A \alpha^{n-1}-B \beta^{n-1}\right) \cdot \\
& =\frac{\left(\alpha^{n+1}-\beta^{n+1}\right)\left(A \alpha^n-B \beta^n\right)}{(\alpha-\beta)^2}+\frac{\left(\alpha^n-\beta^n\right)\left(A \alpha^{n-1}-B \beta^{n-1}\right)}{(\alpha-\beta)^2} \\
& =\frac{(\alpha-\beta)\left(A \alpha^{2 n}-B \beta^{2 n}\right)}{(\alpha-\beta)^2} \\
& =FSH_{2 n} .
\end{aligned}
$$
\end{proof}

\section{The hybrid Fibonacci  polynomial spinor}
 Hoggatt, Philips and Leonard defined the Fibonacci and Lucas Polynomials and they have obtained some more identities involving these polynomials, \cite{hoggatt1971twenty}.

Fibonacci polynomial sequence is defined by the recurrence relation
$$
F_{n+1}(x)=x F_n(x)+F_{n-1}(x)
$$
with initial conditions $F_0(x)=0, F_1(x)=1$.
The first few terms of Fibonacci polynomial sequence are as follows
$$
0,1, x, x^2+1, x^3+2 x, x^4+3 x^2+1, \ldots.
$$
The characteristic equation of this sequence
$$
F_n(t)=t^2-x t-1
$$

Lucas polynomial sequence is defined by the recurrence relation
$$
L_{n+1}(x)=x L_n(x)+L_{n-1}(x)
$$
with initial conditions $L_0(x)=2, L_1(x)=1$.
The first few terms of Lucas polynomial sequence are as follows.
$$
2, x, x^2+2, x^3+3 x, x^4+4 x^2+2, \ldots.
$$

We define hybrid Fibonacci  polynomial and transformation of hybrid Fibonacci polynomial spinor.
$$
Z(x)=F_n(x)+F_{n+1}(x) i+F_{n+2}(x) \varepsilon+F_{n+3}(x) h,
$$

$$
\begin{array}{r}
FSH_n(x)=\left[\begin{array}{l}
F_n(x)+h F_{n+2}(x) \\
\left(F_{n+2}(x)-F_{n+1}(x)\right)+h F_{n+3}(x)
\end{array}\right].
\end{array}
$$

The Fibonacci hybrid polynomial spinor sequence is defined by the recurrence relation
$$
{FSH}_{n+2}(x)={FSH}_{n+1}(x)+{FSH}_n(x) .
$$
The following theorems can be proved in a similar way as presented in Section 4.
\begin{thm}
The Binet formula for hybrid Fibonacci  polynomial sequence

$$
 \quad \text { FSH n }(x)=\frac{1}{2}\left(A(x) \alpha(x)^n-B(x) \beta(x)^n\right), \\
$$
where
$$
A(x)=\left[\begin{array}{l}
\frac{2+2 h+h x^2}{\sqrt{x^2+4}}+h x \\
\frac{1+x h}{\sqrt{x^2+4}}+h x^2+h
\end{array}\right], B(x)=\left[\begin{array}{l}
\frac{2+2 h+2 h x^2}{\sqrt{x^2+4}}-h x \\
\frac{1+x h}{\sqrt{x^2+4}}-h x^2-h
\end{array}\right].
$$
\end{thm}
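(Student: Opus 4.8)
The plan is to reproduce the argument already used for the Binet formula of the hybrid Fibonacci spinor sequence, now carrying the indeterminate $x$ throughout. The sequence $\{FSH_n(x)\}$ obeys the second-order linear recurrence whose characteristic equation $t^2-xt-1=0$ is recorded above; its roots are $\alpha(x)=\frac{x+\sqrt{x^2+4}}{2}$ and $\beta(x)=\frac{x-\sqrt{x^2+4}}{2}$. Since the recurrence operates entrywise on the two-component spinor, the general solution takes the form
$$FSH_n(x)=SH_A(x)\,\alpha(x)^n+SH_B(x)\,\beta(x)^n,$$
where $SH_A(x)$ and $SH_B(x)$ are $2\times1$ matrices, independent of $n$, whose entries are hyperbolic numbers of the shape $p(x)+h\,q(x)$.

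First I would fix the two seed vectors by evaluating the defining transformation at $n=0$ and $n=1$, using $F_0(x)=0$, $F_1(x)=1$, $F_2(x)=x$, $F_3(x)=x^2+1$, and $F_4(x)=x^3+2x$. This gives
$$FSH_0(x)=\begin{bmatrix} hx \\ (x-1)+h(x^2+1) \end{bmatrix},\qquad FSH_1(x)=\begin{bmatrix} 1+h(x^2+1) \\ (x^2-x+1)+h(x^3+2x) \end{bmatrix}.$$

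Next, enforcing the general solution at $n=0$ and $n=1$ yields the linear system $SH_A(x)+SH_B(x)=FSH_0(x)$ and $\alpha(x)SH_A(x)+\beta(x)SH_B(x)=FSH_1(x)$. Solving it componentwise,
$$SH_A(x)=\frac{FSH_1(x)-\beta(x)\,FSH_0(x)}{\alpha(x)-\beta(x)},\qquad SH_B(x)=\frac{\alpha(x)\,FSH_0(x)-FSH_1(x)}{\alpha(x)-\beta(x)},$$
and because $\alpha(x)-\beta(x)=\sqrt{x^2+4}$, the radical appearing in the denominators of the claimed $A(x)$ and $B(x)$ enters exactly at this point. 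Comparing the target expression $FSH_n(x)=\frac12\bigl(A(x)\alpha(x)^n-B(x)\beta(x)^n\bigr)$ with the general solution forces the identifications $A(x)=2\,SH_A(x)$ and $B(x)=-2\,SH_B(x)$; substituting the two seed vectors and simplifying each of the four hyperbolic entries (isolating the real part and the $h$-part) then delivers the stated $A(x)$ and $B(x)$.

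The main obstacle I anticipate is purely the algebraic bookkeeping: one must rationalize the $\sqrt{x^2+4}$ denominators and propagate the hyperbolic unit $h$ through products such as $\beta(x)\,FSH_0(x)$, in which $\beta(x)$ itself carries $\sqrt{x^2+4}$, so that cross terms of the form $h\,x\,\sqrt{x^2+4}$ combine correctly to leave the polynomial coefficients seen in $A(x),B(x)$. There is no conceptual difficulty beyond this; the linear-algebra skeleton is identical to the numerical case treated in Section~4, and once the seed vectors are recovered at $n=0,1$ the recurrence propagates the formula to all $n\ge0$ by induction.
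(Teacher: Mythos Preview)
Your proposal is correct and follows the same approach the paper intends: the paper does not write out a separate proof for this theorem but simply states that the results of Section~5 ``can be proved in a similar way as presented in Section~4,'' and your argument is precisely the polynomial analogue of the proof of Theorem~4.3 (characteristic equation, ansatz $SH_A\alpha^n+SH_B\beta^n$, determination of the coefficient vectors from the values at $n=0,1$). The only point worth flagging is that the paper records the recurrence for $FSH_n(x)$ as $FSH_{n+2}(x)=FSH_{n+1}(x)+FSH_n(x)$, which is a typographical slip; the correct recurrence inherited from the Fibonacci polynomials is $FSH_{n+2}(x)=x\,FSH_{n+1}(x)+FSH_n(x)$, and you have rightly used the characteristic equation $t^2-xt-1=0$ that matches it.
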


\begin{thm}
The generating function for hybrid Fibonacci polynomial spinor
$$
g(t)=\frac{1}{1-x t-t^2}\left[\begin{array}{l}
1+h\left(x^2+x^2 t+x+1\right) \\
1+h\left(x^3+x^3 t+x^2+2 x+x t+1\right)
\end{array}\right]
$$
\end{thm}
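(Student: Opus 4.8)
The plan is to follow verbatim the generating-function argument already carried out in Section~4 for $FSH_n$, now treating $x$ as a fixed parameter and $t$ as the formal variable. First I would set
$$
g(t)=\sum_{n=0}^{\infty}FSH_n(x)\,t^{n},
$$
understood entrywise, so that $g(t)$ is a $2\times 1$ column whose two entries are ordinary power series in $t$ with coefficients in the hyperbolic ring $\mathbb{R}[h]$. The governing recurrence is the one inherited from the Fibonacci polynomials, namely $FSH_{n+2}(x)=x\,FSH_{n+1}(x)+FSH_n(x)$; this is exactly the relation whose characteristic polynomial $t^{2}-xt-1$ produces the denominator $1-xt-t^{2}$ appearing in the claim.

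Next I would isolate the first two terms and apply the recurrence to the tail:
$$
g(t)-FSH_0(x)-FSH_1(x)\,t=\sum_{n=2}^{\infty}FSH_n(x)\,t^{n}=\sum_{n=2}^{\infty}\bigl(x\,FSH_{n-1}(x)+FSH_{n-2}(x)\bigr)t^{n}.
$$
Re-indexing the two sums on the right as $xt\bigl(g(t)-FSH_0(x)\bigr)+t^{2}g(t)$ and collecting the $g(t)$ terms gives the single linear equation
$$
(1-xt-t^{2})\,g(t)=FSH_0(x)+\bigl(FSH_1(x)-x\,FSH_0(x)\bigr)t,
$$
so that dividing by $1-xt-t^{2}$ yields $g(t)$ in closed form. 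This reduces the theorem to identifying the bracketed numerator with the stated $2\times 1$ vector.

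The remaining work, and the only place any real care is needed, is the explicit evaluation of the two seed spinors. Using $F_0(x)=0,\;F_1(x)=1,\;F_2(x)=x,\;F_3(x)=x^{2}+1,\;F_4(x)=x^{3}+2x$ in the defining formula
$$
FSH_n(x)=\left[\begin{array}{l}F_n(x)+h\,F_{n+2}(x)\\ (F_{n+2}(x)-F_{n+1}(x))+h\,F_{n+3}(x)\end{array}\right],
$$
I would compute $FSH_0(x)$ and $FSH_1(x)$ entrywise, form the combination $FSH_0(x)+\bigl(FSH_1(x)-x\,FSH_0(x)\bigr)t$, and expand along the real and hyperbolic parts. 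The main obstacle is purely this polynomial bookkeeping: one must keep the coefficients of $1$ and of $h$ separate in each of the two components and verify that, after substitution and simplification, the numerator agrees with the vector $\bigl[\,1+h(x^{2}+x^{2}t+x+1),\ 1+h(x^{3}+x^{3}t+x^{2}+2x+xt+1)\,\bigr]^{\mathsf{T}}$ displayed in the statement. Once that identity is checked, dividing through by $1-xt-t^{2}$ completes the proof, exactly as in the scalar Fibonacci-spinor case of Section~4.
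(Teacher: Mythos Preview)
Your proposal is correct and is exactly the approach the paper intends: the paper does not give a separate proof for this theorem but simply states that it ``can be proved in a similar way as presented in Section~4,'' i.e., by the standard generating-function manipulation you describe. You have also correctly used the recurrence $FSH_{n+2}(x)=x\,FSH_{n+1}(x)+FSH_n(x)$ inherited from the Fibonacci polynomials (and matching the denominator $1-xt-t^{2}$), even though the paper's displayed recurrence for $FSH_n(x)$ omits the factor~$x$; the remaining work is indeed just the bookkeeping of computing $FSH_0(x)+(FSH_1(x)-x\,FSH_0(x))t$ and comparing with the stated numerator.
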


Now, we define hybrid Lucas polynomial and transformation of hybrid Lucas polynomial spinor.
$$
\begin{array}{r}
z^{\prime}(x)=L_n(x)+L_{n+1}(x) i+L_{n+2}(x) \varepsilon+L_{n+3}(x) h, \\

LSH_n(x)=\left[\begin{array}{l}
LSH_n(x)+h LSH_{n+2}(x) \\
LSH_n(x)+h LSH_{n+3}(x)
\end{array}\right]. \\
\end{array}
$$
The hybrid Lucas polynomial spinor sequence is defined by the recurrence relation
$$
{LSH}_{n+2}(x)={LSH}_{n+1}(x)+LSH_n(x).
$$

\begin{thm}
The Binet formula for hybrid Lucas polynomial spinor
$$
\begin{aligned}
& LSH_n(x)=\frac{1}{2 c}\left(\left[\begin{array}{l}
p(x)+2 c+h\left(c x^2+2 c\right) \\
q(x)+2 c+h\left(c x^3+3 x c\right)
\end{array}\right] \alpha^n+\right. \\
& {\left[\begin{array}{l}
-p(x)+2 c+h\left(c x^2+2 c\right) \\
-q(x)+2 c+h\left(c x^3+3 x c\right)
\end{array}\right] \beta^n, }
\end{aligned}
$$
where $c=\sqrt{x^2+4}, p(x)=h x^3-2 h x+b x$, $q(x)=h x^4+5 x^2 h+4 h$.
\end{thm}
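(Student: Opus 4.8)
The plan is to mirror the ansatz-and-initial-conditions argument already used for the hybrid Fibonacci polynomial spinor and for the hybrid Lucas spinor, transplanting it to the polynomial Lucas setting. The sequence $LSH_n(x)$ inherits the second-order recurrence of the Lucas polynomials, $LSH_{n+2}(x)=x\,LSH_{n+1}(x)+LSH_n(x)$, whose characteristic equation $t^2-xt-1=0$ has roots $\alpha=\frac{x+c}{2}$ and $\beta=\frac{x-c}{2}$ with $c=\sqrt{x^2+4}$. I would first record the identities $\alpha-\beta=c$, $\alpha+\beta=x$, and $\alpha\beta=-1$, since these drive every later simplification; in particular $\alpha^n-\beta^n=c\,F_n(x)$ and $\alpha^n+\beta^n=L_n(x)$.

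Next I would posit the Binet ansatz $LSH_n(x)=A(x)\alpha^n+B(x)\beta^n$ with $A(x),B(x)$ two-entry hybrid-valued vectors. Because the recurrence acts componentwise, each of the two scalar entries of the spinor independently satisfies $t^2-xt-1=0$, so the two-term decomposition is legitimate coordinate by coordinate and $A(x),B(x)$ are pinned down by just two seed values.

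I would then read off those seeds from the transformation together with the first Lucas polynomials $L_0=2,\ L_1=1,\ L_2=x^2+2,\ L_3=x^3+3x,\ L_4=x^4+4x^2+2$, obtaining
$$LSH_0(x)=\begin{bmatrix}2+h(x^2+2)\\ 2+h(x^3+3x)\end{bmatrix},\qquad LSH_1(x)=\begin{bmatrix}1+h(x^3+3x)\\ 1+h(x^4+4x^2+2)\end{bmatrix}.$$
Imposing $A+B=LSH_0(x)$ and $\alpha A+\beta B=LSH_1(x)$ and inverting the $2\times 2$ Vandermonde-type system gives $A=\frac{LSH_1-\beta\,LSH_0}{c}$ and $B=\frac{\alpha\,LSH_0-LSH_1}{c}$. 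Writing each coefficient vector as an even part (multiplying $\alpha^n+\beta^n$) plus an odd part (multiplying $\alpha^n-\beta^n$) makes the structure transparent: the odd parts collect into the $p(x),q(x)$ entries, while the even parts become the common $2c+h(cx^2+2c)$ and $2c+h(cx^3+3xc)$ terms, reproducing the stated closed form after dividing by $2c$.

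The hard part will be the bookkeeping in this final simplification rather than any conceptual step: one must carry the $c=\sqrt{x^2+4}$ factors generated by $\beta\,LSH_0$ and $\alpha\,LSH_0$, rationalize, and then cleanly separate the real and hyperbolic ($h$) parts of each entry so that they line up with the claimed $p(x)$ and $q(x)$. This is where I would double-check the coefficients most carefully, since it is the only place the argument can go wrong.
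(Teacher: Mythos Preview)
Your approach is essentially the paper's own: the paper gives no explicit proof here but says the Section~5 theorems are ``proved in a similar way as presented in Section~4,'' i.e., by writing down the characteristic equation $t^2-xt-1=0$, positing $LSH_n(x)=A(x)\alpha^n+B(x)\beta^n$, and solving the $2\times2$ system from $n=0,1$---exactly what you do. One caution on the bookkeeping you yourself flag as the delicate step: your seed $LSH_1(x)$ uses $L_1(x)=1$, but the Lucas polynomial list in the paper reads $2,\,x,\,x^2+2,\ldots$, so you will want $L_1(x)=x$ when you actually carry out the simplification to match the stated $p(x),q(x)$.
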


\begin{thm}
The generating function for hybrid Lucas polynomial spinor as follows:
$$
g^{\prime}(t)=\frac{1}{1-x t-t^2}\left(\left[\begin{array}{l}
2+x+h\left(x^3+x^2+3 x+2\right. \\
2+x+h\left(x^4+x^3+4 x^2+3 x+2\right.
\end{array}\right]-x t\left[\begin{array}{l}
2+h\left(x^2+2\right) \\
2+h\left(x^3+3 x\right)
\end{array}\right] .\right.
$$
\end{thm}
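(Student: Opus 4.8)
The plan is to follow the generating-function argument of Section~4 verbatim, now in the formal variable $t$ and with the polynomial recurrence $LSH_{n+2}(x)=x\,LSH_{n+1}(x)+LSH_n(x)$ inherited from $L_{n+1}(x)=xL_n(x)+L_{n-1}(x)$, which is the recurrence compatible with the denominator $1-xt-t^2$. First I would set $g'(t)=\sum_{n=0}^{\infty}LSH_n(x)\,t^n$, multiply the recurrence by $t^{n+2}$ and sum over $n\ge 0$ to get
$$\sum_{n=0}^{\infty}LSH_{n+2}(x)\,t^{n+2}=xt\sum_{n=0}^{\infty}LSH_{n+1}(x)\,t^{n+1}+t^2\sum_{n=0}^{\infty}LSH_n(x)\,t^n.$$
Re-indexing each sum so that it is expressed through $g'(t)$ and subtracting the missing low-order terms gives
$$g'(t)-LSH_0(x)-LSH_1(x)\,t=xt\bigl(g'(t)-LSH_0(x)\bigr)+t^2g'(t),$$
hence
$$g'(t)\,(1-xt-t^2)=LSH_0(x)+t\bigl(LSH_1(x)-x\,LSH_0(x)\bigr).$$

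Second, I would pin down the two seed spinors explicitly. Using $L_0(x)=2$, $L_1(x)=x$, $L_2(x)=x^2+2$, $L_3(x)=x^3+3x$ and $L_4(x)=x^4+4x^2+2$ together with the defining transformation
$$LSH_n(x)=\begin{bmatrix}L_n(x)+h\,L_{n+2}(x)\\ L_n(x)+h\,L_{n+3}(x)\end{bmatrix},$$
I obtain
$$LSH_0(x)=\begin{bmatrix}2+h(x^2+2)\\ 2+h(x^3+3x)\end{bmatrix},\qquad LSH_1(x)=\begin{bmatrix}x+h(x^3+3x)\\ x+h(x^4+4x^2+2)\end{bmatrix}.$$

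The final step is a collection of terms. Expanding $LSH_1(x)-x\,LSH_0(x)$ entrywise and adjoining $LSH_0(x)$ produces a numerator whose two components are $h$-graded polynomials in $x$ and $t$; regrouping so that the part proportional to $xt$ is displayed separately puts the answer in the factored shape of the statement, with the $xt$-factor equal to $LSH_0(x)$ and the remaining block assembled from $L_0,\dots,L_4$, while the denominator $1-xt-t^2$ is simply the reversed characteristic polynomial $t^2-xt-1$ of the recurrence. I expect the only genuine obstacle to be the bookkeeping of the hybrid unit $h$ across both spinor components simultaneously --- in particular keeping the $L_{n+2}$- and $L_{n+3}$-coefficients separate while forming $LSH_1(x)-x\,LSH_0(x)$ --- rather than anything structural, since the entire computation lives in the ring of formal power series in $t$ over the hybrid-number-valued polynomials in $x$ and hence raises no convergence question.
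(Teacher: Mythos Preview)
Your approach is exactly the paper's: no separate proof is given for this theorem, only a pointer that the Section~5 results are proved ``in a similar way as presented in Section~4,'' and your generating-function manipulation is precisely the Section~4 argument transported to the polynomial setting. You are also right to work with the recurrence $LSH_{n+2}(x)=x\,LSH_{n+1}(x)+LSH_n(x)$ rather than the printed $LSH_{n+2}(x)=LSH_{n+1}(x)+LSH_n(x)$, since only the former is inherited from the Lucas polynomials and is compatible with the denominator $1-xt-t^2$.

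The one overclaim is your final ``regrouping'' sentence. Your derivation produces the numerator $LSH_0(x)+t\,LSH_1(x)-xt\,LSH_0(x)$; after separating the $xt$-part the remaining block is $LSH_0(x)+t\,LSH_1(x)$, which still carries a factor $t$ on $LSH_1$. The first bracket in the stated formula, however, is $LSH_0(x)+LSH_1(x)$ with \emph{no} $t$: for instance $2+x+h(x^3+x^2+3x+2)=\bigl(2+h(x^2+2)\bigr)+\bigl(x+h(x^3+3x)\bigr)$. This is not a gap in your argument but a typo in the displayed result --- the same missing-$t$ slip already appears in the paper's proof of Theorem~4.4 (where ``$-FSH_0-FSH_1$'' should read ``$-FSH_0-FSH_1 x$'') and propagates to the displayed numerators of Theorems~4.4 and~4.7. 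So carry out your computation as planned, but record the numerator you actually obtain rather than forcing it to match the printed expression.
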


\begin{thm}
For $n \geq 1$, we have
\begin{itemize}
\item $LSH_n(x)=FSH_{n+1}(x)+FSH_{n-1}(x)$, \\
\item $LSH_n(x)=2 FSH_{n+1}(x)-x \cdot FSH_n(x)$,\\
\item $(a-b)^2 FSH_n(x)=LSH_{n+1}(x)+(-1)^{n+1} LSH_{n-1}(x)$.
\end{itemize}
\end{thm}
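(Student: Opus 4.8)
The plan is to reduce each spinor identity to a scalar identity between the Fibonacci and Lucas polynomials, using that the spinor transformation is linear and commutes with index shifts. For a polynomial sequence $(G_m(x))$ set
$$
\mathcal{S}[G]_n=\begin{bmatrix}G_n(x)+h\,G_{n+2}(x)\\(G_{n+2}(x)-G_{n+1}(x))+h\,G_{n+3}(x)\end{bmatrix},
$$
so that, reading the two spinor definitions consistently, $FSH_n(x)=\mathcal{S}[F]_n$ and $LSH_n(x)=\mathcal{S}[L]_n$. The operator $\mathcal{S}$ is $\mathbb{R}$-linear in the sequence $G$ and shift-equivariant, $\mathcal{S}[G_{\bullet+k}]_n=\mathcal{S}[G]_{n+k}$; consequently any linear relation among shifted Fibonacci and Lucas polynomials that holds for every index lifts \emph{verbatim} to the corresponding spinors. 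This observation is the engine for all three parts, and it makes the proofs essentially bookkeeping once the right scalar identities are in hand.

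For part (i) I would invoke the classical polynomial identity $L_m(x)=F_{m+1}(x)+F_{m-1}(x)$; applying $\mathcal{S}[\cdot]_n$ and using linearity with shift-equivariance gives
$$
LSH_n(x)=\mathcal{S}[L]_n=\mathcal{S}[F_{\bullet+1}+F_{\bullet-1}]_n=FSH_{n+1}(x)+FSH_{n-1}(x).
$$
Part (ii) is identical in spirit: from $F_{m+1}(x)=x\,F_m(x)+F_{m-1}(x)$ together with (i) one obtains $L_m(x)=2F_{m+1}(x)-x\,F_m(x)$, which lifts to $LSH_n(x)=2FSH_{n+1}(x)-x\,FSH_n(x)$. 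One could instead substitute the explicit Binet formulas of the two preceding theorems and compare coefficient vectors entrywise; the shift-equivariance argument is just the coordinate-free form of that computation.

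For part (iii) the governing scalar identity is the Lucas--Fibonacci relation $(a-b)^2F_m(x)=L_{m+1}(x)+\sigma\,L_{m-1}(x)$ for a suitable sign $\sigma$, where $a=\alpha(x)$ and $b=\beta(x)$ are the roots of $t^2-xt-1$, so that $a+b=x$, $ab=-1$ and $(a-b)^2=x^2+4$. I would pin down $\sigma$ and establish the relation from Binet's formulas $F_m(x)=\frac{a^m-b^m}{a-b}$ and $L_m(x)=a^m+b^m$, using $a^2+1=a^2-ab=a(a-b)$ and $b^2+1=-b(a-b)$ to collapse the right-hand side to a multiple of $a^m-b^m$, and substituting $(-1)^{m\pm1}=(ab)^{m\pm1}$ where needed. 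Applying $\mathcal{S}$ then transports the relation to the spinors. The main obstacle is exactly this sign: the bookkeeping through $ab=-1$ is delicate, since a careless collapse alters $\sigma$, and one must verify which power of $-1$ genuinely survives. The feature that keeps the spinor statement clean is that the indices occurring inside $\mathcal{S}[\cdot]$ differ from one another only by $2$ and hence share a common parity; this lets a single parity-of-$n$ factor be pulled out uniformly across all four entries, producing the stated $(-1)^{n+1}$. Settling this sign and confirming the parity consistency across the components is the crux, whereas (i) and (ii) are routine once the shift-equivariance of $\mathcal{S}$ is in place.
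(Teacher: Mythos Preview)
Your operator-lifting argument for (i) and (ii) is sound and, in fact, tidier than what the paper offers: the paper gives no proof of this theorem at all, merely the blanket remark preceding Theorem~5.1 that the polynomial results follow ``in a similar way as presented in Section~4,'' i.e.\ by direct Binet substitution. Linearity and shift-equivariance of $\mathcal{S}$ do exactly what you claim, and the scalar identities $L_m=F_{m+1}+F_{m-1}$ and $L_m=2F_{m+1}-xF_m$ transport immediately.

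For (iii), however, your parity claim is false and the whole plan collapses there. The spinor $\mathcal{S}[G]_n$ carries the four indices $n,n+1,n+2,n+3$; these do \emph{not} ``differ from one another only by $2$,'' so both parities occur, and a scalar relation with an alternating factor $(-1)^{m+1}$ cannot be lifted through $\mathcal{S}$ with a single global sign in front of $LSH_{n-1}(x)$. Worse, the scalar identity your Binet computation would actually produce has no alternating sign: from $a^2+1=a(a-b)$ and $b^2+1=-b(a-b)$ one gets $L_{m+1}(x)+L_{m-1}(x)=(a-b)(a^m-b^m)=(a-b)^2F_m(x)$, with $\sigma=+1$ for every $m$. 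The printed statement with $(-1)^{n+1}$ already fails at the scalar level (take $x=1$, $n=2$: $5F_2=5$ but $L_3-L_1=3$), so no spinor lift of it can hold. Your method thus cannot prove (iii) as stated; what it \emph{does} prove is $(a-b)^2\,FSH_n(x)=LSH_{n+1}(x)+LSH_{n-1}(x)$, which is almost certainly the intended identity.
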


\section{Conclusion}
In this study, hybrid spinors were defined using the concept of hybrid numbers. Subsequently, hybrid Fibonacci and hybrid Lucas spinor sequences were defined by employing terms and polynomials of Fibonacci and Lucas number sequences. Fundamental properties such as generator functions, Binet formula, and others were derived for these hybrid sequences. Finally, utilizing the polynomials associated with these sequences, hybrid Fibonacci and hybrid Lucas polynomial spinors were introduced, and similar equations were provided. As a result, this study presents a new perspective for future research by amalgamating concepts crucial in algebra, number theory, and related fields, along with mathematical entities like hybrid numbers and spinors, commonly utilized in areas such as geometry and mathematical physics.

\section*{Declaration of competing interest}
The authors declare that they have no known competing financial interests or personal relationships that could have appeared to influence the work reported in this paper.

\end{document}